\begin{document}

\title*{Synchronization, Lyapunov exponents and stable manifolds for random dynamical systems}
\titlerunning{Synchronization and Lyapunov exponents} 
\author{Michael Scheutzow and Isabell Vorkastner}
\institute{Michael Scheutzow \at Institut f\"ur Mathematik, MA 7-5, Fakult\"at II,
	Technische Universit\"at Berlin, Stra{\ss}e des 17. Juni 136, 10623 Berlin; 
	\email{ms@math.tu-berlin.de}
	\and Isabell Vorkastner \at Institut f\"ur Mathematik, MA 7-5, Fakult\"at II,
	Technische Universit\"at Berlin, Stra{\ss}e des 17. Juni 136, 10623 Berlin; 
	\email{vorkastn@math.tu-berlin.de}}
%
%
\maketitle

\abstract{During the past decades, the question of existence and properties of a random attractor of a random dynamical system generated by an S(P)DE has received 
considerable attention, for example by the work of Gess and R\"ockner. Recently some authors investigated sufficient conditions which guarantee {\em synchronization}, i.e.~existence of a random attractor which is a singleton. It is reasonable to conjecture that synchronization and negativity (or non-positivity) of the top 
Lyapunov exponent of the system should be closely related since both mean that the system is contracting in some sense. Based on classical results by Ruelle, we 
formulate positive results in this direction. Finally we provide two very simple but striking examples of one-dimensional monotone random dynamical systems for which 0 is a fixed point. In the first example, the Lyapunov exponent is strictly negative but nevertheless all trajectories starting outside of 0 
diverge to $\infty$ or $-\infty$. In particular, there is no synchronization (not even locally).  In the second example (which is just the time reversal of the first), the Lyapunov exponent is strictly positive but nevertheless there is synchronization.}

\keywords{Synchronization, Lyapunov exponent, Random dynamical system}
\vspace{10pt} \\
\textbf{Mathematics Subject Classification (2010):} 37D10, 37D45, 37G35, 37H15

\newpage
\section{Introduction}

During the past decades, the question of existence and properties of a random attractor of a random dynamical system generated by an S(P)DE has received 
considerable attention, see for example \cite{Crauel1994}, \cite{Gess2011}, \cite{Beyn2011}. Recently some papers investigated sufficient conditions 
which guarantee {\em synchronization}, i.e.~existence of a random attractor which is a singleton, see \cite{Chueshov2004}, \cite{Flandoli2016}, 
\cite{Gess2016},  \cite{Cranston2016}, \cite{Vorkastner2016}. It is reasonable to conjecture that synchronization and negativity (or non-positivity) 
of the top Lyapunov exponent of the system should be closely related since both mean that the system is contracting in some sense. 
A positive result of that kind in the finite dimensional case is \cite[Lemma 3.1]{Flandoli2016} which states that (under an ergodicity assumption) 
negativity of the top Lyapunov exponent plus an integrability assumption on the derivative in a neighborhood of the support of the invariant measure
guarantees that for almost every $x$ in the support of the invariant measure, there exists a random neighborhood of $x$ which forms a local stable manifold. 
In particular, the system contracts locally. In the present paper, we formulate a corresponding result for separable Hilbert spaces. 
Like  \cite[Lemma 3.1]{Flandoli2016} the proof is an easy consequence of results by Ruelle \cite{Ruelle82}. Example 1 in Section \ref{examples} shows that 
the result becomes untrue if the integrability assumption on the derivative is dropped. In Example 1 we investigate a simple one-dimensional random 
dynamical system generated by independent and identically distributed strictly monotone and bijective maps from the real line to itself which fix the point 
0. The Lyapunov exponent is strictly negative but nevertheless the point 0 is not even locally asymptotically stable. In fact all trajectories starting 
outside 0  go to $\infty$ or $-\infty$ (depending on the sign of the initial condition). In particular, there is no synchronization.  
The reason for this behaviour is that the random function is very steep outside a very small (random) neighborhood of 0 (even though the derivative at 
0 is $1/2$ almost surely). 

We also consider the opposite behaviour. Proposition \ref{proposition2} requires that the unstable manifold $U$ of a random fixed point is non-trivial and 
states that under this condition, synchronization cannot hold. Example 2 shows that replacing the non-triviality of $U$ by positivity of the top Lyapunov 
exponent does not imply lack of synchronization. In fact, Example 2 is just the time reversal of Example 1.

\section{Preliminaries and notation}

Let $(\mathcal{H},\|\cdot\|)$ be a separable Hilbert space with Borel $\sigma$-algebra $\mathcal{B}(\mathcal{H})$
and let $(\Omega, \mathcal{F}, \mathbb{P}, \theta)$ be a metric dynamical system, i.e.
$(\Omega, \mathcal{F},\mathbb{P})$ is a probability space and $(\theta_t)_{t\in \mathbb{T}}$ a group of
jointly measurable maps on $(\Omega, \mathcal{F},\mathbb{P})$ such that $\theta_0=\mathrm{id}$ with invariant measure $\mathbb{P}$.
Here, $\mathbb{T}$ is either $\mathbb{Z}$ or $\mathbb{R}$. We denote the set of non-negative numbers in $\mathbb{T}$ by  ${\mathbb{T}}_+$.
\\
Further, let $\varphi : \mathbb{T}_+ \times \Omega \times \mathcal{H} \rightarrow \mathcal{H}$ be jointly measurable, $\varphi_0 (\omega,x) =x$, 
$\varphi_{s+t} (\omega,x) = \varphi_t (\theta_s \omega, \varphi_s (\omega,x))$ for all $x \in \mathcal{H}$,
and  $ x \mapsto \varphi_t (\omega, x)$ continuous,
$s,t \in \mathbb{T}_+$ and $\omega \in \Omega$. The collection 
$(\Omega, \mathcal{F}, \mathbb{P}, \theta,\varphi)$ is then called a \emph{random dynamical system},
see \cite{Arnold1988} for a comprehensive treatment.
\\
We suppose that there is a family $\left( \mathcal{F}_{s,t}\right)_{-\infty \leq s \leq t \leq \infty}$
of sub-$\sigma$ algebras of $\mathcal{F}$ such that
$\theta^{-1}_r \left( \mathcal{F}_{s,t} \right) = \mathcal{F}_{s+r,t+r}$ for all $r,s,t$,  
$\mathcal{F}_{t,u} \subset \mathcal{F}_{s,v}$ whenever $s \leq t \leq u \leq v$, and $\varphi_t (\cdot, x)$ is $\mathcal{F}_{0,t}$-measurable for each 
$ t \in \mathbb{T}_+$. 
If, additionally, $\mathcal{F}_{s,t}$ and $\mathcal{F}_{u,v}$ are independent whenever $s \leq t \leq u \leq v$, 
then  the collection $(\Omega, \mathcal{F}, \mathbb{P}, \theta,\varphi)$ is called a
\emph{white-noise random dynamical system}. We will generally not assume the white-noise property in the following. 
\\
Define the \emph{skew product} $\Theta$ on $\Omega \times \mathcal{H}$ by 
$\Theta_t (\omega,x):= ( \theta_t \omega, \varphi_t (\omega,x))$ for $t \in \mathbb{T}_+$.
We then say that a probability measure $\rho$ on $\Omega \times \mathcal{H}$ is an \emph{invariant measure} for a 
random dynamical system if its marginal on $\Omega$ is $\mathbb{P}$ and $\Theta (t) \rho = \rho$
for all $t \in \mathbb{T}_+$. 
\begin{definition}
	A family $\left\{ A (\omega ) \right\}_{\omega \in \Omega}$ of non-empty subsets of $\mathcal{H}$
	is called 
	\begin{enumerate}
	\item[(i)] 
		a \emph{random point} (resp. \emph{random compact set}) if it is $\mathbb{P}$-almost surely a point
		(resp. compact set) and 
		$\omega \mapsto \inf_{a \in A(\omega)} \left\| x - a \right\|$ is $\mathcal{F}$-measurable 
		for each $x \in \mathcal{H}$.
	\item[(ii)]
		\emph{$\varphi$-invariant} if $\varphi_t (\omega, A(\omega)) = A(\theta_t \omega )$ for all 
		$t \in \mathbb{T}_+$ and almost all $\omega \in \Omega$.
	\end{enumerate}
\end{definition}
\begin{definition}
	Let $(\Omega, \mathcal{F}, \mathbb{P}, \theta, \varphi)$ be a random dynamical system. 
	A random compact set $A$ is called a \emph{weak attractor} if 
	it satisfies the following properties
	\begin{enumerate}
	\item [(i)] $A$ is $\varphi$-invariant
	\item [(ii)] for every compact set $B \subset \mathcal{H}$ 
		\begin{align*}
			\lim_{t \rightarrow \infty} \sup_{x \in B} \inf_{a \in A(\omega)}
				\left\| \varphi_t (\theta_{-t}\omega ,x) - a \right\| =0 
				\qquad  \textrm{in probability}.
		\end{align*}
	\end{enumerate}
If the convergence in (ii) is even almost sure, then $A$ is called a  \emph{pullback attractor}.
\end{definition}

\begin{definition}
        Let $(\Omega, \mathcal{F}, \mathbb{P}, \theta, \varphi)$ be a random dynamical system.
	\emph{Synchronization} occurs if there is a weak attractor $A(\omega)$ being 
	a singleton for $\mathbb{P}$-almost every $\omega \in \Omega$.
\end{definition}

Further, we use the following notation. Denote by $B(x,r)$
the open ball centered at $x \in \mathcal{H}$ with radius $r >0$ and by $\bar{B}(x,r)$ the respective 
closed ball. We say  that a map is of class $C^{1,\delta}$ if its derivative is H\"older continuous
of exponent $\delta$. Denote by $D$ the derivative in the state space. 
\section{Top Lyapunov exponent and synchronization}

In this section we demonstrate some relations between the sign of the top Lyapunov exponent, stable/unstable
submanifolds and synchronization of a random dynamical system $\varphi$.

\begin{proposition}
	\label{stable_manifold}
	Let $\varphi$ be a discrete or continuous time random dynamical system with state space $\mathcal{H}$ and 
        assume that $\varphi_1 (\omega, \cdot ) \in C^{1, \delta} $ 
	for some $\delta \in (0,1)$. 
	Assume further that $\varphi$ has an invariant measure $\rho$ 
	such that
	\begin{equation*}
		\label{condition1}
		 \int_{\Omega \times \mathcal{H}} 
		 \log^+ \left\| D \varphi_1 (\omega,x ) \right\| \D \rho (\omega,x) < \infty.
	\end{equation*}	
	and
	\begin{equation}
		\label{condtion2}
		\int_{\Omega \times \mathcal{H}} \log^+ \left( 
		\left\| \varphi_1 (\omega, \cdot + x ) - \varphi_1 (\omega, x ) \right\|_{C^{1,\delta} 
		(\bar{B}(0,1))}
		\right) \D \rho(\omega,x) < \infty	.	
	\end{equation}
	Then, the (discrete-time) top Lyapunov exponent
	\begin{equation*}
		\lambda (\omega , x) = \lim_{n \rightarrow \infty} \frac{1}{n} \log 
		\left\| D \varphi_n (\omega,x ) \right\|
	\end{equation*}
	is defined for $\rho$-almost all $(\omega,x) \in \Omega \times \mathcal{H}$.
	Assume that there exists some $\mu <0$ such that $ \lambda (\omega ,x) < \mu$ almost everywhere.
	Then, there exist measurable functions $0< \alpha (\omega,x) < \beta (\omega,x) <1$ such that
	for $\rho$-almost all $(\omega,x)$
	\begin{equation*}
		S(\omega,x) = \left\{ y \in \bar{B}(x, \alpha (\omega,x)) : 
		\left\| \varphi_n (\omega, y ) - \varphi_n (\omega, x ) \right\| \leq \beta(\omega,x) \E ^{\mu n}
		\textrm{ for all } n \geq 0 \right\}
	\end{equation*}
	is a measurable neighborhood of $x$. We refer to $S(\omega,x)$ as the stable manifold.
\end{proposition}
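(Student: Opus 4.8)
The plan is to read the statement as a direct application of the multiplicative ergodic theorem and the local stable manifold theorem of Ruelle in Hilbert space \cite{Ruelle82}, carried out exactly as in the proof of \cite[Lemma 3.1]{Flandoli2016}. The first move is to pass to the skew product and to center the dynamics along orbits. Put $X:=\Omega\times\mathcal{H}$; then $\rho$ is invariant for the $\rho$-preserving measurable map $\Theta_1$ on $X$. For $\xi=(\omega,x)\in X$ define the centered time-one map
\begin{equation*}
  F_\xi(y):=\varphi_1(\omega,x+y)-\varphi_1(\omega,x),\qquad y\in\mathcal{H},
\end{equation*}
so that $F_\xi(0)=0$, $F_\xi\in C^{1,\delta}$, $DF_\xi(0)=D\varphi_1(\omega,x)=:T_\xi$, the quantity $\|F_\xi\|_{C^{1,\delta}(\bar B(0,1))}$ is precisely the one whose $\log^+$ is integrated in \eqref{condtion2}, and $\|T_\xi\|\le\|F_\xi\|_{C^{1,\delta}(\bar B(0,1))}$. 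From the cocycle identity $\varphi_n(\omega,\cdot)=\varphi_1(\theta_{n-1}\omega,\cdot)\circ\cdots\circ\varphi_1(\omega,\cdot)$ one checks by induction that
\begin{equation*}
  F_{\Theta_{n-1}\xi}\circ\cdots\circ F_\xi\,(y)=\varphi_n(\omega,x+y)-\varphi_n(\omega,x),
\end{equation*}
and, differentiating at $y=0$, that $D\varphi_n(\omega,x)=T_{\Theta_{n-1}\xi}\cdots T_\xi$; i.e.\ $D\varphi_n$ is exactly the linear cocycle over $\Theta_1$ generated by $T_\xi$.

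Next, the map $n\mapsto\log\|D\varphi_n(\omega,x)\|$ is a subadditive cocycle over $\Theta_1$ (submultiplicativity of the operator norm together with the chain rule), and its positive part at $n=1$ is $\rho$-integrable by the first integrability hypothesis (the one on $\log^+\|D\varphi_1\|$); Kingman's subadditive ergodic theorem — equivalently, the multiplicative ergodic theorem of \cite{Ruelle82} — then yields that $\lambda(\omega,x)=\lim_n\frac1n\log\|D\varphi_n(\omega,x)\|$ exists for $\rho$-a.e.\ $(\omega,x)$, with values in $[-\infty,\infty)$, which is the first conclusion. For the stable manifold, the standing assumption $\lambda<\mu<0$ $\rho$-a.e.\ means that at $\rho$-a.e.\ $\xi$ \emph{all} Lyapunov exponents of the cocycle $T_\xi$ lie below $\mu$, so in Ruelle's terminology the whole space $\mathcal{H}$ is the stable subspace at level $\mu$. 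The first integrability hypothesis is the condition on the linear parts $T_\xi$ required by \cite{Ruelle82}, and \eqref{condtion2} is the $\log^+$-integrability of the $C^{1,\delta}$-size of the nonlinear perturbations $F_\xi$; hence both quantities are tempered along $\Theta_1$-orbits, and Ruelle's local stable manifold theorem applies. It produces, for $\rho$-a.e.\ $\xi=(\omega,x)$, measurable radii $0<\alpha(\omega,x)<\beta(\omega,x)<1$ such that
\begin{equation*}
  \tilde S(\xi):=\Big\{\,y\in\bar B(0,\alpha(\omega,x)):\ \|F_{\Theta_{n-1}\xi}\circ\cdots\circ F_\xi(y)\|\le\beta(\omega,x)\,\E^{\mu n}\ \text{for all }n\ge0\,\Big\}
\end{equation*}
contains a ball about $0$. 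Translating by $x$ and invoking the composition identity above, $S(\omega,x)=x+\tilde S(\xi)$ is exactly the set in the statement and is a neighborhood of $x$; its measurability, and that of $\alpha$ and $\beta$, is part of Ruelle's construction, and in any case $S(\omega,x)$ is the countable intersection over $n\ge0$ of the plainly measurable sets $\{y\in\bar B(x,\alpha(\omega,x)):\|\varphi_n(\omega,y)-\varphi_n(\omega,x)\|\le\beta(\omega,x)\,\E^{\mu n}\}$.

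Since the authors advertise this as an easy consequence of \cite{Ruelle82}, the main obstacle is not a hard estimate but careful bookkeeping: checking that the composition and derivative identities follow from the RDS axioms, and that \eqref{condtion2} together with the hypothesis on $\log^+\|D\varphi_1\|$ match — up to harmless constants built into the definition of the $C^{1,\delta}(\bar B(0,1))$-norm — the precise integrability conditions required in \cite{Ruelle82}. (The role of $\delta>0$ is to make the H\"older control of $DF_\xi$ quantitative enough to run the orbitwise, merely tempered, perturbation argument.) One point worth flagging is that $\Theta_1$ need not be invertible; this causes no difficulty, because, unlike its unstable counterpart, the stable manifold theorem does not require invertibility of the base transformation. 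Finally, since in the present regime there is no unstable direction, one could if desired bypass the graph-transform machinery of \cite{Ruelle82} and prove directly that $\tilde S(\xi)$ contains $\bar B(0,\alpha(\xi))$ by a telescoping estimate using the tempered bounds on $\|T_{\Theta_k\xi}\|$ and on the H\"older constants of $DF_{\Theta_k\xi}$.
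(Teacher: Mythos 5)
Your proposal is correct and follows essentially the same route as the paper: center the time-one maps along orbits over the skew product, verify the integrability hypotheses, and apply Ruelle's stable manifold theorem \cite[Theorem 5.1]{Ruelle82} with no unstable directions ($Q=0$). The only step the paper makes explicit that you pass over lightly is why $S(\omega,x)$ is a full ball rather than merely a graph over a subset $D^v$ of the stable ball --- the paper observes that $D^v$ is both open and closed in $\bar B(0,\alpha(\omega,x))$, hence equal to it --- but the direct telescoping argument you flag at the end would close this point equally well.
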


\begin{proof}
	By the same construction as in \cite[Lemma 3.1]{Flandoli2016}, define $M:= \Omega \times \mathcal{H}$,
	$\tilde{\mathcal{F}} := \mathcal{F} \otimes \mathcal{B}(\mathcal{H})$ and 
	$f : M \mapsto M$
	given by $f (m) := \Theta_1 (\omega,x)$ for $m = (\omega,x) \in M$.
	Further, set 
	\begin{equation*}
		F_m (y) := \varphi_1 (\omega, y + x ) - \varphi_1 (\omega, x ) \quad \textrm{for } m=(\omega,x) \in M
	\end{equation*}
	and apply \cite[Theorem 5.1]{Ruelle82} with $Q=0$. 
	Observe that the set $D^v$ in the proof of \cite[Theorem 5.1]{Ruelle82}
	is -- in our special case -- both open and closed in the ball $\bar{B}(0, \alpha(\omega))$ and therefore 
	$\bar{B}(0, \alpha (\omega)) \subset D^v$. This implies $ \bar{B}(x, \alpha(\omega,x)) \subset S(\omega,x)$
	almost everywhere and therefore $ \bar{B}(x, \alpha(\omega,x)) = S(\omega,x)$ almost surely.
	\smartqed \qed
\end{proof}

\begin{corollary}\label{coro}
        Under the assumptions of the previous proposition the random dynamical system $\varphi$ is asymptotically stable, i.e.~there 
        exists  a deterministic non-empty, open set $U$ in $\mathcal{H}$ such that
	\begin{equation*}
		\mathbb{P} \left( \lim_{n \rightarrow \infty} \mathrm{diam} 
		\left( \varphi_n ( \cdot, U )
		\right) =0 \right) >0.
	\end{equation*}
\end{corollary}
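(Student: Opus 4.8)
The plan is to exploit the stable manifold structure from Proposition \ref{stable_manifold} together with a pullback/sweeping argument to upgrade the local contraction (which is attached to a random base point $x$ in the support of the marginal of $\rho$) into contraction on a fixed deterministic open set with positive probability. First I would fix a point $x_0$ in the support of the marginal $\mu := (\pi_{\mathcal H})_*\rho$ and a radius $r>0$; the goal is to show $U = B(x_0,r)$ works for $r$ small enough. By Proposition \ref{stable_manifold}, for $\rho$-a.e. $(\omega,x)$ we have $\bar B(x,\alpha(\omega,x)) = S(\omega,x)$, on which $\|\varphi_n(\omega,y)-\varphi_n(\omega,x)\| \le \beta(\omega,x)\E^{\mu n} \to 0$; in particular $\mathrm{diam}\,\varphi_n(\omega,\bar B(x,\alpha(\omega,x))) \to 0$. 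Since $\alpha(\omega,x)>0$ a.e., there is a deterministic $\alpha_0>0$ and a set $G \subset \Omega\times\mathcal H$ of positive $\rho$-measure on which $\alpha(\omega,x) \ge \alpha_0$; intersecting with a set where $x$ is close to $x_0$, we find $\varepsilon>0$ and a positive-measure set $G'$ of pairs $(\omega,x)$ with $\|x-x_0\|<\varepsilon$ and $\bar B(x,\alpha_0) \supset \bar B(x_0,\alpha_0-\varepsilon) \ni x_0$, so on $G'$ the ball $\bar B(x_0, \alpha_0 - \varepsilon)$ lies inside $S(\omega,x)$ and hence $\mathrm{diam}\,\varphi_n(\omega, \bar B(x_0,\alpha_0-\varepsilon)) \to 0$.

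The remaining issue is that $G'$ is a subset of $\Omega\times\mathcal H$, not of $\Omega$, so I need to pass to its $\omega$-section. Since $\rho$ has marginal $\mathbb P$ on $\Omega$, disintegrate $\rho(\D\omega,\D x) = \rho_\omega(\D x)\,\mathbb P(\D\omega)$; positivity of $\rho(G')$ gives a set $\Omega' \subset \Omega$ of positive $\mathbb P$-measure such that for $\omega\in\Omega'$ the section $G'_\omega$ is non-empty, i.e. there exists at least one $x=x(\omega)$ with $(\omega,x)\in G'$. For such $\omega$ the conclusion $\mathrm{diam}\,\varphi_n(\omega,\bar B(x_0,\alpha_0-\varepsilon))\to 0$ holds. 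Taking $U := B(x_0,\alpha_0-\varepsilon)$ (deterministic, non-empty, open) then yields $\mathbb P(\lim_n \mathrm{diam}\,\varphi_n(\cdot,U)=0) \ge \mathbb P(\Omega')>0$, which is exactly the claimed asymptotic stability. One small technical point: the convergence estimate in $S(\omega,x)$ is stated for $\varphi_n(\omega,\cdot)$ started from the fixed $\omega$, which is precisely what appears in $\mathrm{diam}(\varphi_n(\cdot,U))$, so no pullback shift is actually needed here — the statement of the corollary is forward in time.

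The main obstacle I anticipate is the measurability/disintegration bookkeeping needed to extract a \emph{deterministic} ball from the a.e. positivity of $\alpha(\omega,x)$: one must choose $\alpha_0$ and $\varepsilon$ so that simultaneously (a) $\{\alpha \ge \alpha_0\}$ has positive measure, (b) it intersects $\{\|x-x_0\|<\varepsilon\}$ in positive measure, and (c) $\alpha_0 - \varepsilon > 0$, and then verify that the resulting $\omega$-section has positive $\mathbb P$-measure. This is routine measure theory (use that $\alpha>0$ a.e. to pick $\alpha_0$, then use regularity of $\mu$ and that $x_0\in\mathrm{supp}\,\mu$ to pick $x_0$ and $\varepsilon$), but it is the only place where care is required; everything else is a direct quotation of Proposition \ref{stable_manifold}.
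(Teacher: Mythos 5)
Your argument is correct and is essentially the same as the paper's: the paper simply cites \cite[Lemma 3.3]{Flandoli2016} (remarking that its finite-dimensional proof carries over), and that lemma's proof is precisely your measure-theoretic extraction of a deterministic ball of radius $\alpha_0-\varepsilon$ from the a.e.\ positive random radius $\alpha(\omega,x)$, followed by passing to the $\omega$-sections via the disintegration of $\rho$. The one point to state carefully is that $x_0$ must be taken in the support of the $\mathcal{H}$-marginal of $\rho$ restricted to $\{\alpha\geq\alpha_0\}$, not merely in $\mathrm{supp}\,\mu$; your ordering of choices (first $\alpha_0$, then $x_0$ and $\varepsilon<\alpha_0$) already accommodates this.
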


\begin{proof}
	Existence of a neighborhood $B(\omega, \alpha(\omega,x))$ as in Proposition \ref{stable_manifold} implies local asymptotic
	stability by \cite[Lemma 3.3]{Flandoli2016} (the latter Lemma is formulated and proved only in the finite-dimensional 
	case but the proof in our set-up is almost identical). 
    \smartqed \qed
\end{proof}

	If we further assume that $\varphi$ is a  white-noise random dynamical system, $\varphi$ has a weak attractor, 
	satisfies an irreducibility
	condition and  contracts on large sets, then synchronization follows,
	see \cite[Theorem 2.14]{Flandoli2016} for exact conditions.\\

\par

Random attractors with positive top Lyapunov exponent are more difficult to characterize.

\begin{proposition}
	\label{proposition2}
	Assume there exist a $\varphi$-invariant random point $A(\omega)$, some $\mu > 0$ and
	some measurable functions $0< \alpha (\omega) < \beta (\omega)<1$
	such that
	\begin{eqnarray}
			\label{unstable_submanifold}
			\begin{split}
			U(\omega) = \big\{ x_0 \in \bar{B} (A(\omega), \alpha (\omega)):
			\exists (x_n)_{n \in \mathbb{N}} \textrm{ with } \varphi (\theta_{-n} \omega, x_n) = x_{n-1} \\
			\textrm{ and}
			\left\| x_n - A(\theta_{-n} \omega) \right\| \leq \beta (\omega) \E ^{-\mu n}
			\textrm{ for all } n \geq 0
			\big\}
			\end{split}
		\end{eqnarray}
	is not trivial (i.e.~consists of more than one point) almost surely.
	Further assume there exists some $x_0 (\omega) \in U(\omega) \setminus A(\omega)$
	such that  $x_n(\omega)$ are random points for
	$n \geq 0$ where $x_n (\omega)$ are chosen as in (\ref{unstable_submanifold}).
	Then, the random dynamical system $\varphi$ does not synchronize.
\end{proposition}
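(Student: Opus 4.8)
The plan is to argue by contradiction: suppose $\varphi$ synchronizes, i.e.\ there is a weak attractor that is $\mathbb{P}$-almost surely a singleton, say $\{a(\omega)\}$ (a priori unrelated to the given $A$). For a singleton weak attractor the defining convergence in part (ii) of the definition of a weak attractor reads $\sup_{x\in B}\|\varphi_n(\theta_{-n}\omega,x)-a(\omega)\|\to 0$ in probability for every deterministic compact $B\subset\mathcal{H}$, and since $\mathrm{diam}(\varphi_n(\theta_{-n}\omega,B))\le 2\sup_{x\in B}\|\varphi_n(\theta_{-n}\omega,x)-a(\omega)\|$ we obtain
\begin{equation*}
	\mathrm{diam}\big(\varphi_n(\theta_{-n}\omega,B)\big)\longrightarrow 0\qquad\text{in probability as }n\to\infty
\end{equation*}
for every such $B$ (it is enough to let $n$ run through the positive integers, even when $\mathbb{T}=\mathbb{R}$).

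Now fix $x_0(\omega)\in U(\omega)\setminus A(\omega)$ together with the random points $x_n(\omega)$, $n\ge 0$, from (\ref{unstable_submanifold}). Iterating the identity $\varphi_1(\theta_{-n}\omega,x_n)=x_{n-1}$ by the cocycle property, and using $\varphi$-invariance of $A$, we obtain, almost surely and for every $n\ge 1$,
\begin{equation*}
	\varphi_n(\theta_{-n}\omega,x_n(\omega))=x_0(\omega),\qquad \varphi_n(\theta_{-n}\omega,A(\theta_{-n}\omega))=A(\omega).
\end{equation*}
The idea is then transparent: the two left-hand sides are values of a long pullback evolution, which is contracting under the assumed synchronization, while the two right-hand sides are independent of $n$ and distinct.

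The only genuine difficulty is that $x_n(\omega)$ and $A(\theta_{-n}\omega)$ are random, whereas the displayed attraction property concerns only \emph{deterministic} compact sets; this gap is closed via the $\theta$-invariance of $\mathbb{P}$. Since $A$ is a random point, $\omega\mapsto\|A(\omega)\|$ is measurable and a.s.\ finite, so given $\eta>0$ we may choose $R>1$ with $\mathbb{P}(\|A(\omega)\|>R-1)<\eta/2$, and then $\mathbb{P}(\|A(\theta_{-n}\omega)\|>R-1)<\eta/2$ for all $n$. On the event $\{\|A(\theta_{-n}\omega)\|\le R-1\}$ the bound $\|x_n(\omega)-A(\theta_{-n}\omega)\|\le\beta(\omega)\E^{-\mu n}\le\beta(\omega)<1$ (this is where $\mu>0$ enters) forces both $x_n(\omega)$ and $A(\theta_{-n}\omega)$ into $B:=\bar{B}(0,R)$, so that, by the identities above,
\begin{equation*}
	\|x_0(\omega)-A(\omega)\|=\|\varphi_n(\theta_{-n}\omega,x_n(\omega))-\varphi_n(\theta_{-n}\omega,A(\theta_{-n}\omega))\|\le\mathrm{diam}\big(\varphi_n(\theta_{-n}\omega,B)\big)
\end{equation*}
on that event. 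Therefore $\mathbb{P}(\|x_0(\omega)-A(\omega)\|>\varepsilon)\le\eta/2+\mathbb{P}(\mathrm{diam}(\varphi_n(\theta_{-n}\omega,B))>\varepsilon)$ for every $\varepsilon>0$ and every $n$; letting $n\to\infty$ and then $\eta\downarrow 0$ gives $\|x_0(\omega)-A(\omega)\|=0$ almost surely, contradicting $x_0(\omega)\in U(\omega)\setminus A(\omega)$. Hence $\varphi$ does not synchronize. I expect this deterministic-to-random reduction to be the only delicate point; everything else is a direct application of the cocycle property and the definition of a weak attractor.
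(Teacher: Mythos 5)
Your overall strategy is essentially the paper's: pull back along the orbit, use the cocycle property to write $x_0(\omega)=\varphi_n(\theta_{-n}\omega,x_n(\omega))$ and $A(\omega)=\varphi_n(\theta_{-n}\omega,A(\theta_{-n}\omega))$, and play the contraction of the pullback of a fixed deterministic set against the positive distance $\|x_0(\omega)-A(\omega)\|$. You even streamline one point: by deriving $\|x_0(\omega)-A(\omega)\|=0$ a.s.\ directly from the diameter estimate, you avoid the paper's appeal to the uniqueness of weak attractors. The reduction from random to deterministic reference sets via $\theta$-invariance of $\mathbb{P}$ is also the right move, and you correctly identify it as the delicate step.

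The gap is in how you close that step: you take $B:=\bar B(0,R)$. The proposition is stated on a separable Hilbert space $\mathcal H$, which may be infinite-dimensional, and there closed balls are not compact. The weak-attractor property is only asserted for compact sets $B$, so the convergence $\mathrm{diam}\big(\varphi_n(\theta_{-n}\omega,B)\big)\to 0$ in probability is simply not available for a ball, and your chain of inequalities collapses at its first link. This is exactly the difficulty the paper's proof is organised around. It invokes Crauel's tightness result for random compact sets twice: first for the random point $A$, producing a deterministic compact $K$ with $\mathbb{P}(A(\theta_{-n}\omega)\in K)>3/4$ for every $n$; then, because a $1$-neighbourhood of $K$ is again non-compact in infinite dimensions, for the random compact set $\hat K(\omega)=K\cup\{x_m(\omega)\}_{m\in I(\omega)}$, whose compactness rests on $\|x_m(\omega)-A(\theta_{-m}\omega)\|\le\beta(\omega)e^{-\mu m}\to 0$ along $I(\omega)=\{m: A(\theta_{-m}\omega)\in K\}$. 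This yields a deterministic compact $\tilde K$ containing $x_n(\omega)$ with probability at least $1/2$, uniformly in $n$, after which your (and the paper's) contradiction goes through. In finite dimensions your argument is complete and slightly cleaner; for general $\mathcal H$ as stated, it has a real hole at this point, and the fix is the two-stage compactification just described.
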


\begin{proof}
	Suppose $\varphi$ does synchronize. Then, there exists a weak attractor $\tilde{A}$ being
	a single random point. By the same arguments as in \cite[Lemma 1.3]{Flandoli2016} (stating uniqueness of a weak attractor), 
        $A$ and $\tilde{A}$ have to agree almost surely.
	Let $(x_n (\omega))_{n \in \mathbb{N}_0}$ be as in the proposition.
	There exists some $q>0$ such that
	\begin{equation*}
		\mathbb{P} \left( \left\| A(\omega) - x_0(\omega)\right\| > q \right) > \frac{3}{4} \; .
	\end{equation*}
	By \cite[Proposition 2.15]{Crauel2003}, there exists some compact set $K$ such that
	 $ \mathbb{P} \left( A (\omega) \subset K \right) > 3/4 $. 
	 Define the index set $I ( \omega ) = \left\{ n \in \mathbb{N}_0 : A(\theta_{-n} \omega ) \in K \right\}$.
	 Then,
	 \begin{align*}
	 	\mathbb{P} \left( n \in I(\omega) \right) = \mathbb{P} \left( A(\theta_{-n} \omega ) \in K \right)
	 	> \frac{3}{4}
	 \end{align*}
	 for every $n \in \mathbb{N}_0$ 
         and 
         $$
         \lim_{m \to \infty} \big( \inf_{y \in K}\|x_m(\omega)-y\| 1_{\{m \in I(\omega)\}}\big)     =0 
         $$ 
         Therefore, the set
          $$
          \hat K(\omega):= K \cup \{x_m(\omega)\}_{m \in I(\omega)}
          $$
         is a random compact set and hence, by  \cite[Proposition 2.15]{Crauel2003}, 
         there exists a deterministic compact set $\tilde K$ 
         such that $\mathbb{P}(\hat K (\omega) \subset \tilde K)\ge 3/4$.

         Combining these estimates, it follows for each $n \in \mathbb{N}_0$ that
	 \begin{eqnarray*}
	 	&&\mathbb{P} \left( \sup_{y \in \tilde{K}} 
	 		\left\| \varphi_n (\theta_{-n} \omega , y ) - A(\omega) \right\| >q \right) \\
	 	&&\geq \mathbb{P} \left( 
	 		\left\|  \varphi_n (\theta_{-n} \omega , x_n (\omega) ) - A(\omega) \right\| >q, \;
	 		x_n(\omega) \in \tilde K\right) \geq \frac 34 -\mathbb{P} \left( x_n\notin \tilde K\right)\\
                &&\geq  \frac 34 -\mathbb{P} \left( \hat K (\omega) \nsubseteq  \tilde K\right)-  
                \mathbb{P} \left( x_n \notin \hat K(\omega)\right) 
                 \geq  \frac 34 - \frac 14 -\frac 14 =\frac 14.
	 \end{eqnarray*}
         Therefore, there is no synchronization. 
	 \smartqed \qed

\end{proof}

Note that the set $U(\omega)$ is typically the unstable manifold with respect to the invariant measure 
$\rho (\D \omega, \D x) = \delta_{A(\omega)}(\D x) \, \mathbb{P}(\D \omega)$.

\begin{remark}
	For a finite dimensional space $\mathcal{H}$, the assumption of $x_n (\omega)$ to be random points 
	can be replaced by measurablity of the unstable manifold $U(\omega)$. 
	This condition is a consequence of the stable/unstable manifold theorem 
	\cite[Theorem 5.1 and 6.1]{Ruelle82} due to
	measurability of $\varphi$.
	Measurability of $U(\omega)$ is sufficient in this case since the selection theorem 
	\cite[Theorem III.9, p.67]{Castaing1977} shows that $x_0 (\omega)$ can be chosen to be  measurable
	and $\tilde{K}$ can be replaced by 
	$\left\{ y \in \mathcal{H}: \inf_{z \in K} \left\| y - z \right\| \leq 1 \right\}$.
\end{remark}

\begin{remark}
		In case of a time-invertible random dynamical system, the unstable manifold $U(\omega)$ can be
		obtained by choosing a stable manifold of the time-reversed random dynamical system. \\
		More generally, the unstable manifold can be obtained by using  \cite[Theorem 6.1]{Ruelle82}.
		Therefore, let $A(\omega)$ be an $\varphi$-invariant random point and define the cocycle
		$F^n_\omega (y)= \varphi_n (\omega, y + A(\omega) ) - A(\theta_n \omega)$.
		Under similar assumptions as in Theorem \ref{stable_manifold} 
		with $\rho (\D \omega, \D x) = \delta_{A(\omega)}(\D x) \, \mathbb{P}(\D \omega)$ 
		but supposing a positive (discrete-time) top Lyapunov exponent 
		\begin{equation*}
			\lambda (\omega) = \lim_{n \rightarrow \infty} \frac{1}{n} \log 
			\left\| D \varphi_n (\omega,A(\omega) ) \right\|
		\end{equation*}
		such that there exists $\mu >0$ with $\lambda(\omega) > \mu$ almost everywhere, we
		can apply the unstable manifold theorem \cite[Theorem 6.1]{Ruelle82}. 
		This theorem shows that there exist measurable functions $0< \alpha (\omega) < \beta (\omega)$
		such that $U (\omega)$ as in (\ref{unstable_submanifold})
		is a measurable submanifold of $\bar{B} (A(\omega), \alpha (\omega))$ almost surely. However, this does 
		not exclude the possibility that $U(\omega)$ is a single point, see Example \ref{example2}.
\end{remark}

\section{Examples}
\label{examples}

We provide two examples of independent iterated functions on $\mathbb{R}$. Each of them generates a random dynamical system. 
The functions will be almost surely  strictly increasing, continuous and onto and they will fix 0. 
In the first example, all trajectories which do not start at 0 converge to $\infty$ or $-\infty$ almost surely (depending on the sign 
of the initial condition) in spite of the fact the Lyapunov exponent associated to the equilibrium 0 is strictly negative. 
In particular, there is no synchronization. 
The second example just consists of an iteration of the 
inverses of the functions in the first example (in particular it is also order preserving). 
In this case the Lyapunov exponent is the negative of the one in the first example and hence strictly positive. 
From the results about the first example, we immediately obtain that the second example exhibits synchronization, 
i.e.~every compact subset of $\mathbb{R}$ contracts to 0 in probability as $n \to \infty$. Since the convergence in the first example is not only 
in probability but even almost sure we obtain, that in the second example $\{0\}$ is not only a weak attractor but even a pullback attractor (see \cite[Proposition 4.6]{CDS09}). 

We will comment on the relation of these examples to the results in the previous section after presenting the examples.


\begin{example}
	\label{example1}
	Let $ \left( \xi_n \right)_{n \in \mathbb{N}} >0 $ 
	be independent identically distributed real-valued random variables such that 
	$\mathbb{P} \left( \xi_1 \leq 2^{-k} \right) = 1 / (k-1)$ for all $k \geq 2$ and $ k \in 
	\mathbb{N}$.	
        Define the function $g:{\mathbb{R}}\times (0,\infty) \to \mathbb{R}$ by 
	\begin{equation*}
		g( z , \xi) = 
		\begin{cases}
			z/2, 								& |z| \leq 2 \xi \\
			z/ \xi + \xi -2,					& z > 2 \xi \\
			z/ \xi - \xi +2,					& z< - 2 \xi \; .
		\end{cases}
	\end{equation*}
        
	Obviously, $0$ is a fixed point of $g(\cdot,\xi)$ for each $\xi>0$ and hence $\rho:=\mathbb{P} \otimes \delta_0$ is 
	an invariant measure of the associated discrete time random 
    dynamical system $\varphi$ given by $\varphi_n (\omega, z) = g\left(\varphi_{n-1} (\omega,z), \xi_n \right)$
    for $z \in \mathbb{R}$ and $n \in \mathbb{N}$
    with state space $\mathcal{H}=\mathbb{R}$. Clearly, the Lyapunov exponent associated to $\rho$ is 
        $\log (1/2) <0 $. We write $Z_n(\omega):=\varphi_n(\omega,z)$ whenever the initial condition $Z_0=z$ is clear from the context. We will show that 
	$|Z_n|$ converges to infinity $\mathbb{P}$-almost surely whenever $Z_0 \neq 0$. 
	To see this, observe that the following properties hold for every $m \in \mathbb{N}$: 
        \begin{itemize}
          \item   $|Z_{m-1}| \geq 1$ implies $ | Z_{m} | \geq 4 |Z_{m-1} | -2  \geq 2 |Z_{m-1} |$,   
	  \item   $|Z_{m}| < 1$  implies $|Z_{m-1}|\le 4 \xi_m$.
        \end{itemize}         
%
        Assume that $ |Z_0| > 2^{-k}$ for some $k \in \mathbb{N}$. Then, 
	$ |Z_m| > 2^{-k-m}$ for all $m \in \mathbb{N}$ and therefore
	\begin{eqnarray*}
		\mathbb{P} \left( |Z_n| <1 \right)
		& \leq & \mathbb{P} \left( \xi_m > 2^{-k-m-1} \textrm{ for all } 1 \leq m \leq n \right) \\
		& = & \prod_{m=1}^n \frac{k+m-1}{k+m} = \frac{k}{k+n} 
		\quad \underrightarrow{n \rightarrow \infty} \quad 0 \; .
	\end{eqnarray*}
        Using the first of the two observations above, we obtain $|Z_n|\to \infty$ almost surely whenever $Z_0\neq 0$.  
\end{example}

\begin{example}
	\label{example2}
        Define the sequence $ \left( \xi_n \right)_{n \in \mathbb{N}} >0 $
        as above and define $f:{\mathbb{R}}\times (0,\infty) \to \mathbb{R}$ by 
	\begin{equation*}
		f( \cdot , \xi) = g^{-1}( \cdot,\xi) 
        \end{equation*}
        for each fixed $\xi>0$. As mentioned at the beginning of the section, 
        the associated random dynamical system exhibits synchronization in spite of the fact 
        that the top Lyapunov exponenent associated to its invariant measure $\rho=\mathbb{P}\otimes \delta_0$ is strictly positive. 
\end{example}

\begin{remark}
	In none of the two examples above the random dynamical system is continuously differentiable in the initial state $z$.
	This can easily be mended. Just replace the function $g$ by a function $\tilde g$ 
        which is smooth and strictly increasing in its first argument such that $|\tilde g(x)|\ge |g(x)|$ for all $x \in \mathbb{R}$ 
        and such that $\tilde g(x)=g(x)$ whenever $|x| \notin [\xi,3\xi]$. Then, the absolute values of the modified trajectories converge to $\infty$ 
        even faster than for $g$ and in Example  \ref{example2} the speed of synchronization is even faster after the modification. Note that 
        the change from $g$ to $\tilde g$ does not change the Lyapunov exponents. \\

Let us comment on the relation of the examples to the results in the previous section. Obviously, the random dynamical system $\varphi$ in 
Example \ref{example1} does not only fail to synchronize but  even fails to be asymptotically stable as defined in 
Corollary \ref{coro} (note that in this case asymptotic stability is necessary but not sufficient for synchronization by \cite{Flandoli2016}).
Therefore, the assumptions of Proposition \ref{stable_manifold} cannot hold for this example. Indeed, property (\ref{condtion2}) 
fails to hold since
\begin{eqnarray*}
		\mathbb{E} \left[ \log^+ \left\| \varphi_1 \right\|_{C^1([-1,1])} \right]
		 \geq  \mathbb{E} \left[ \log^+  \frac{1}{\xi_1} \right]  = \infty \; .
	\end{eqnarray*}

The first integrability assumption in Proposition  \ref{stable_manifold}  and negativity of the Lyapunov exponent both hold in Example 
\ref{example1} showing that (\ref{condtion2}) cannot be dropped in Proposition  \ref{stable_manifold}.   
\\
Actually, the stable manifold of Example \ref{example1} is even $\left\{ 0 \right\}$. 
Since the stable manifold of Example \ref{example1} and the unstable manifold of Example \ref{example2} coincide,
Example \ref{example2} does not satisfy the assumptions of Proposition \ref{proposition2}. 
In particular, positivity of the top Lyapunov exponent implies neither non-triviality of the unstable manifold 
nor lack of synchronization.

\end{remark}

\bibliographystyle{spmpsci}
\bibliography{mybib}

\begin{thebibliography}{10}
\providecommand{\url}[1]{{#1}}
\providecommand{\urlprefix}{URL }
\expandafter\ifx\csname urlstyle\endcsname\relax
  \providecommand{\doi}[1]{DOI~\discretionary{}{}{}#1}\else
  \providecommand{\doi}{DOI~\discretionary{}{}{}\begingroup
  \urlstyle{rm}\Url}\fi

\bibitem{Arnold1988}
Arnold, L.: Random Dynamical Systems.
\newblock Springer Monographs in Mathematics. Springer-Verlag, Berlin (1998)

\bibitem{Beyn2011}
Beyn, W.J., Gess, B., Lescot, P., R\"ockner, M.: The global random attractor
  for a class of stochastic porous media equations.
\newblock Comm. Partial Differential Equations \textbf{36}(3), 446--469 (2011)

\bibitem{Castaing1977}
Castaing, C., Valadier, M.: Convex Analysis and Measurable Multifunctions.
\newblock Lecture Notes in Mathematics, Vol. 580. Springer-Verlag, Berlin-New
  York (1977)

\bibitem{Chueshov2004}
Chueshov, I., Scheutzow, M.: On the structure of attractors and invariant
  measures for a class of monotone random systems.
\newblock Dyn. Syst. \textbf{19}(2), 127--144 (2004)

\bibitem{Cranston2016}
Cranston, M., Gess, B., Scheutzow, M.: Weak synchronization for isotropic
  flows.
\newblock Discrete Contin. Dyn. Syst. Ser. B \textbf{21}(9), 3003--3014 (2016)

\bibitem{Crauel2003}
Crauel, H.: Random Probability Measures on Polish Spaces.
\newblock Stochastics Monographs. CRC Press (2003)

\bibitem{CDS09}
Crauel, H., Dimitroff, G., Scheutzow, M.: Criteria for strong and weak random
  attractors.
\newblock J. Dynam. Differential Equations \textbf{21}(2), 233--247 (2009)

\bibitem{Crauel1994}
Crauel, H., Flandoli, F.: Attractors for random dynamical systems.
\newblock Probab. Theory Related Fields \textbf{100}(3), 365--393 (1994)

\bibitem{Flandoli2016}
Flandoli, F., Gess, B., Scheutzow, M.: {Synchronization by noise}.
\newblock Probab. Theory Related Fields  (2016).
\newblock \doi{10.1007/s00440-016-0716-2}

\bibitem{Gess2016}
Flandoli, F., Gess, B., Scheutzow, M.: {Synchronization by noise for
  order-preserving random dynamical systems}.
\newblock Ann. Probab.  (2016).
\newblock \doi{10.1214/16-AOP1088}

\bibitem{Gess2011}
Gess, B., Liu, W., R\"ockner, M.: Random attractors for a class of stochastic
  partial differential equations driven by general additive noise.
\newblock J. Differential Equations \textbf{251}(4-5), 1225--1253 (2011)

\bibitem{Ruelle82}
Ruelle, D.: Characteristic exponents and invariant manifolds in {H}ilbert
  space.
\newblock Ann. of Math. \textbf{115}(2), 243--290 (1982)

\bibitem{Vorkastner2016}
Vorkastner, I.: Noise dependent synchronization of a degenerate sde  (2016).
\newblock To appear in Stoch. Dyn., arXiv:1607.01627

\end{thebibliography}
\end{document}